\newtheorem{theorem}{Theorem}[section]
\newtheorem{question}{Question}[section]
\newtheorem{prop}[theorem]{Proposition}
\newtheorem{lemma}[theorem]{Lemma}
\theoremstyle{remark}
\newtheorem{remark}{Remark}[section]
\numberwithin{equation}{section}
\DeclareMathOperator\supp{\rm supp}
\def\N{\mathbb{N}}
\def\R{\mathbb{R}}
\def\Z{\mathbb{Z}}
\begin{document}

\title[The continuity of Beurling density and Beurling dimension]{The continuity of Beurling density and Beurling dimension of spectra of a class of self-affine spectral measures}

\author{Zi-Yun Chen}
\address[Zi-Yun Chen]{School of Mathematics and Information Science, Guangzhou University, Guangzhou, 510006, P.~R.~China}
\email{2112415042@e.gzhu.edu.cn}

\author{Zhi-Yi Wu$^\ast$}
\address[Zhi-Yi Wu]{School of Mathematics and Information Science, Guangzhou University, Guangzhou, 510006, P.~R.~China}
\email{zhiyi\_wu2021@163.com}

\author{Min-Min Zhang}
\address[Min-Min Zhang]{Department of Applied Mathematics, Anhui University of Technology, Ma'anshan, 243002, P.~R.~China}
\email{zhangminmin0907@163.com}
\thanks{*Corresponding author.}

\subjclass[2010]{Primary 28A80; 42C05}

\keywords{Self-affine measure; spectral measure; Beurling dimension; Beurling density}


\begin{abstract}
It is well-known that the Beurling dimension of the spectra of certain singular continuous spectral measures possesses an intermediate property.  In this paper, we establish that for a class of self-affine spectral measures $\mu$, both the Beurling dimension and Beurling density of their spectra attain full flexibility simultaneously. Specifically, for any $t\in (0,\dim_H^w(\supp(\mu)))$ and $s\in [0,\infty)$, there exists a spectrum $\Lambda:=\Lambda_{t,s}$ of $\mu$ satisfying
\[\dim_{Be}(\Lambda)=t\quad\text{and}\quad D_{t}^+(\Lambda)=s,\]
where $\dim_H^w$ denotes the pseudo Hausdorff dimension, $\dim_{Be}$ denotes the Beurling dimension and $D_{t}^+$ denotes the $t$-Beurling density. These results provide new insights into the structure of the spectra for a singular continuous spectral measure.
\end{abstract}

\maketitle

\section{Introduction}
We say that $\mu$ is a {\it spectral measure} on $\R^d$ if $\mu$ is a probability measure on $\R^d$ and there exists a discrete set $\Lambda\subseteq \R^d$ such that $E(\Lambda):=\{e^{-2\pi i\langle\lambda,x\rangle}\colon\lambda\in\Lambda\}$ forms an orthonormal basis for $L^2(\mu)$. The set $\Lambda$ is called a {\it spectrum} of $\mu$.

The theory of spectral measure originated from the well-known fact that the exponential functions $\{e^{-2\pi i\langle n,x\rangle}\colon n\in\Z^d\}$ form an orthonormal basis for $L^2([0,1]^d)$, i.e., the Lebesgue measure restricted to $[0,1]^d$ is a spectral measure, with $\Z^d$ serving as its spectrum and was initiated by the spectral set conjecture proposed by Fuglede \cite{Fug74} in 1974, during his study of extension problems for commuting partial differential operators. This conjecture asserts that the normalized Lebesgue measure restricted to a measurable set $\Omega$ with positive finite measure is a spectral measure if and only if $\Omega$ can tile the Euclidean space by translations. While the conjecture turned out to be false in general \cite{Tao04}, it triggered extensive exploration of spectral measures beyond the classical Lebesgue setting.

In 1998, Jorgensen and Pedersen \cite{JP98} constructed the first example of a singular, non-atomic spectral measure, i.e., the standard middle-fourth Cantor measure $\mu_{4,\{0,2\}}$ is a spectral measure. Since then, the spectrality of self-similar measures, self-affine measures and Moran measures has been extensively explored; see \cite{AH14,Dai12,DHLau14,DHLai19,DJ07a,DJ07b,LW02,LMW22} and references therein. Meanwhile, a variety of exotic phenomena completely absent in the Lebesgue setting were gradually discovered. First, the spectra of a singular continuous spectral measure are typically far from being lattices (they may display intricate tree-like structures \cite{ADH22,Dai16,DHLai13,DHS09}), and many of them have the cardinality of the continuum up to translations \cite{DHS09,FHW18}. Second, the spectra of many singular continuous spectral measures exhibit a scaling property: some integer multiples of a spectrum are also spectra \cite{ADH22,Dai16,DH16,HTW19,LW02,Str00} and the convergence of Fourier series strongly depends on the choice of the scaling spectra \cite{DHS14,Str00,Str06}. Third, for certain singular continuous spectral measures, the Beurling dimension of their spectra possesses an intermediate value property, attaining every value between zero and the maximal possible dimension \cite{DHSW11,LW1,Liu21,Lu25,TW21}.

To make the above third phenomenon precise, we recall the notion of Beurling dimension. For a discrete set $\Lambda\subseteq\R^d$ and $r>0$, the \textit{upper $r$-Beurling density} of $\Lambda$ is defined  by
 $$D_r^{+}(\Lambda)=\limsup\limits_{h\rightarrow\infty}\sup\limits_{x\in\R^d}\frac{\#(\Lambda\cap B(x, h))}{h^r},$$
where $\#E$ is the cardinality of the set $E$ and $B(x, h)$ is the open ball with center $x$ and radius $h$. The {\it Beurling dimension} of $\Lambda$ is then
\[
\dim_{Be}(\Lambda)=\inf\{r>0 : D_r^+(\Lambda)=0\}=\sup\{r>0 : D_r^+(\Lambda)=\infty\}.
\]
In the classical setting, the Beurling density  plays a key role (e.g., the classic density result of Landau \cite{Lan67}), whereas for singular continuous spectral measures the Beurling dimension has become a fundamental tool in the study of spectral structure. In particular, for a class of Moran spectral measures including $\mu_{4,\{0,2\}}$, an intermediate value property was established in \cite{LW1}. We present the precise result  for $\mu_{4,\{0,2\}}$.
\begin{theorem}[\cite{DHSW11,LW1}]\label{interddd}
For any orthogonal set $\Lambda$ of $\mu_{4,\{0,2\}}$, i.e., $E(\Lambda)$ is orthogonal in $L^2(\mu_{4,\{0,2\}})$, we have that $\dim_{Be}(\Lambda)\leq \frac{1}{2}$. Moreover, for any $t\in [0,\frac{1}{2}]$, there exists a spectrum $\Lambda_t$ such that $\dim_{Be}(\Lambda_t)=t$.
\end{theorem}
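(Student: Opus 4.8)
The plan is to treat the two assertions separately, both anchored on the elementary observation that orthogonality of $E(\Lambda)$ is equivalent to $(\Lambda-\Lambda)\setminus\{0\}\subseteq\mathcal Z(\hat\mu)$, where for $\mu=\mu_{4,\{0,2\}}$ one computes $|\hat\mu(\xi)|=\prod_{j\ge 1}|\cos(2\pi\xi/4^j)|$, so that the zero set is $\mathcal Z(\hat\mu)=\{m\in\Z\setminus\{0\}\colon v_2(m)\text{ is even}\}$, with $v_2$ the $2$-adic valuation. Since these zeros are all integers, every nonzero difference lies in $\Z$; hence $\Lambda$ sits in a single coset of $\Z$, and because Beurling density and dimension are translation invariant I may assume $\Lambda\subseteq\Z$.

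For the upper bound I would prove the counting estimate $\#(\Lambda\cap I)\le C\,|I|^{1/2}$ for every interval $I$, which forces $D_{1/2}^+(\Lambda)<\infty$ and therefore $\dim_{Be}(\Lambda)\le\tfrac12$. The key step is combinatorial: writing each $\lambda\in\Lambda\cap[0,4^n)$ in base $4$ as $\lambda=\sum_{j=0}^{n-1}\lambda_j4^j$, consider the parity map $\phi(\lambda)=(\lambda_0\bmod 2,\dots,\lambda_{n-1}\bmod2)\in\{0,1\}^n$. If $\phi(\lambda)=\phi(\lambda')$ for distinct $\lambda,\lambda'$, then at the lowest position $i$ where their digits differ the difference $\lambda_i-\lambda_i'$ is a nonzero even number, so $v_2(\lambda-\lambda')=2i+1$ is odd, contradicting orthogonality. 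Hence $\phi$ is injective and $\#(\Lambda\cap[0,4^n))\le2^n=(4^n)^{1/2}$; covering an arbitrary interval by boundedly many base-$4$ windows of comparable length gives the stated estimate for all $I$.

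For the flexibility statement I would realise each target dimension by a maximal-tree spectrum. For an odd integer sequence $(d_j)_{j\ge0}$ set $\Lambda(d)=\{\sum_{j}\epsilon_jd_j4^j\colon\epsilon_j\in\{0,1\},\text{ finitely many nonzero}\}$; since each label $d_j$ is odd, the computation above shows every nonzero difference has even $2$-adic valuation, so $E(\Lambda(d))$ is always orthogonal. To hit a prescribed $t\in(0,\tfrac12)$ I would choose $d_j$ to be an odd integer of size comparable to $4^{cj}$ with $c=\tfrac1{2t}-1\ge0$; then the $2^{N}$ points using only levels $j<N$ fill $[0,\asymp4^{(c+1)N}]$, and analysing the self-similar gap structure yields $\#(\Lambda(d)\cap[x,x+L])\asymp L^{t}$ uniformly, so that $0<D_t^+(\Lambda(d))<\infty$ and $\dim_{Be}(\Lambda(d))=t$. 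The canonical choice $d_j\equiv1$ recovers $t=\tfrac12$, and super-exponentially growing labels give the endpoint $t=0$.

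The hard part will be completeness: the construction above guarantees only that $E(\Lambda(d))$ is orthonormal, whereas the statement demands a genuine spectrum. I would establish $\sum_{\lambda\in\Lambda(d)}|\hat\mu(\xi+\lambda)|^2\equiv1$ via the transfer-operator and extreme-cycle criterion for maximal trees, showing that the growing odd labels introduce no nontrivial extreme cycle. This is precisely the delicate point treated in \cite{DHSW11,LW1}, and it is where the admissibility of the sequence $(d_j)$ (and, for $t=0$, a separate verification) must be checked with care.
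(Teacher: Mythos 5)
Your first half is correct and self-contained. The paper itself gives no proof of Theorem \ref{interddd} (it is quoted from \cite{DHLai13,DHSW11,LW1}; indeed the authors' own Theorems \ref{MR1}--\ref{MR3} switch to the planar measure $\mu_{R,B}$ precisely because there every maximal orthogonal set over $\Z$ is automatically a spectrum by Theorem \ref{thsspc}, so the completeness issue disappears). Judged on its own terms, your upper-bound argument works: the zero set is indeed $\{n\in\Z\setminus\{0\}\colon v_2(n)\ \text{even}\}$, so after translation $\Lambda\subseteq\Z$; the parity map on base-$4$ digits is injective on $\Lambda\cap[k4^n,(k+1)4^n)$ because the lowest differing digit contributes $2$-adic valuation $2i+1$ and dominates all higher terms; and the covering step gives $\#(\Lambda\cap I)\le C|I|^{1/2}$, hence $D_{1/2}^+(\Lambda)<\infty$ and $\dim_{Be}(\Lambda)\le\frac12$. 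This is essentially the argument of \cite{DHSW11}.

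The second half has a genuine gap, which you flag yourself: the sets $\Lambda(d)=\{\sum_j\epsilon_jd_j4^j\}$ with odd labels are only shown to be \emph{orthogonal}, while the theorem requires \emph{spectra} of every dimension $t\in[0,\frac12]$. Orthogonality (indeed even maximality) of such tree-type sets does not by itself imply completeness for $\mu_{4,\{0,2\}}$; one must actually verify $Q_{\Lambda(d)}(\xi)\equiv 1$ (Theorem \ref{thsspcaa}) for an admissible choice of $(d_j)$ realizing each $t$, and for $t=0$ the labels must grow super-exponentially while still passing this test --- exactly the delicate constructions of \cite{DHSW11,LW1,AL23}. Deferring this to a citation is not acceptable inside a proof of the theorem, since that verification \emph{is} the substantive content of the ``moreover'' clause. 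By contrast, the density estimate $\#(\Lambda(d)\cap[x,x+L])\asymp L^{t}$ is only sketched but is repairable along the lines you indicate: if $4^{(c+1)M}\approx L$, two points of $\Lambda(d)$ within distance $L$ must agree on all levels above $M+O(1)$, so any window of length $L$ meets a single translate of a $2^{M+O(1)}$-point block. The completeness step is the real missing piece.
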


Based on this,  Dai \footnote{It is worth pointing out that we obtained Theorem \ref{interddd} in 2020 (published in 2024), and shortly after, Dai raised Question \ref{queb}.} \cite{Dai23} subsequently posed the following question:
\begin{question}\label{queb}
For any $(t,s)\in(0,\frac{1}{2})\times[0,\infty)$, does there exist a spectrum $\Lambda:=\Lambda_{t,s}$ of $\mu_{4,\{0,2\}}$ such that
\[\dim_{Be}(\Lambda)=t\quad \text{and}\quad D_{t}^+(\Lambda)=s?\]
\end{question}
\begin{remark}
When $t=0$, it is easy to see that $s$ can only be infinity. When $t=\frac{1}{2}$, Wu and the second author proved in \cite{WW24} that for any regular spectrum $\Lambda$ of $\mu_{4,\{0,2\}}$, $D_{\frac{1}{2}}^+(\Lambda)\leq \sqrt{6}$ and this bound is attainable; they also conjectured that the same bound holds for all spectra. If this conjecture is true, then for $t=\frac{1}{2}$ one would expect $s\in[0,\sqrt{6}]$.
\end{remark}
We conjecture that the answer to Question \ref{queb} is affirmative; however, it remains unresolved and persists as an open problem in the current literature. Instead, by substituting $\mu_{4,\{0,2\}}$ with a class of self-affine measures on the plane, we establish analogous conclusions to those in Question \ref{queb}.

Let $R\in M_n(\Z)$ be an expanding matrix, i.e., all eigenvalues of the integer matrix $R$ have moduli strictly larger than $1$. Let $D\subseteq \Z^d$ be a finite subset. Then by Hutchinson's theorem \cite{Fal14}, there exists a unique Borel probability  measure $\mu:=\mu_{R,B}$, which satisfies
\begin{equation}\label{sefaf}
\mu(E)=\sum_{b\in B}\frac{1}{\#B}\mu\circ\varphi_b^{-1}(E)
\end{equation}
for all Borel subsets $E$ of $\R^d$, where $\varphi_b(x)=R^{-1}(x+b)$. Such a measure $\mu_{R,B}$ is called a {\it self-affine measure}.

In this paper, we consider the self-affine measure $\mu_{R,B}$ generated by
\begin{equation*}
\text{the expanding matrix}~R\in M_2(\Z)~\text{and the digit set}~ B=\left\{\begin{pmatrix}
		0 \\
		0 \\
	\end{pmatrix},\begin{pmatrix}
		1 \\
		0 \\
	\end{pmatrix}\right\}.
 \end{equation*}
There are many papers investigating this model and a wide range of specific cases were considered; see \cite{Li07,Li12,Li16,LWen12,WA23} and references therein. While in general it is very difficult to completely determine the spectrality of this model, it is known that for the specific choice $R=\begin{pmatrix}
		2 & 1 \\
		0 & 2 \\
	\end{pmatrix}$, the measure $\mu_{R,B}$ is spectral and all of its spectra can be described explicitly in \cite{Li07}. For completeness, we present the proof in Section 2 (Theorem \ref{thsspc}). In this paper, we focus on this interesting case, i.e.,
\begin{equation}\label{seco}
R=\begin{pmatrix}
		2 & 1 \\
		0 & 2 \\
	\end{pmatrix}\quad \text{and}\quad B=\left\{\begin{pmatrix}
		0 \\
		0 \\
	\end{pmatrix},\begin{pmatrix}
		1 \\
		0 \\
	\end{pmatrix}\right\}
\end{equation}
 and tackle similar questions to Question \ref{queb}.

Combining Theorem 1.3 in \cite{TW21} and Theorem 1.3 in \cite{AL23}, we first obtain the following:
\begin{theorem}\label{thupp}
Let $\mu_{R,B}$ be the self-affine measure generated by the matrix and the digit set defined in \eqref{seco}. For any spectrum $\Lambda$ of $\mu_{R,B}$, $0\leq \dim_{Be}(\Lambda)\leq 1$, where the lower and upper bounds can be obtained.
\end{theorem}

On the basis of Theorem \ref{thupp}, we obtain an intermediate value property of spectra for $\mu_{R,B}$.
\begin{theorem}\label{MR1}
Let $\mu_{R,B}$ be the self-affine measure generated by the matrix and the digit set defined in \eqref{seco}. Then for any $t\in[0,1]$, there exists a spectrum $\Lambda_{t}$ of $\mu_{R,B}$ such that
\[\dim_{Be}(\Lambda_{t})=t.\]
\end{theorem}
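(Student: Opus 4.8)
The plan is to explicitly construct, for each target dimension $t \in [0,1]$, a spectrum of the form $\Lambda_{\mathbb{Z},\beta}$ whose Beurling dimension is exactly $t$. The natural starting point is the explicit description of all spectra of $\mu_{R,B}$ furnished by Theorem \ref{thsspc} (the model with $R=\bigl(\begin{smallmatrix}2 & 1\\ 0 & 2\end{smallmatrix}\bigr)$). Since $B=\{(0,0)^T,(1,0)^T\}$ lives on the first coordinate axis, I expect the spectral structure to factor through the first coordinate, so that a spectrum is built from a sequence of digit choices together with an auxiliary parameter $\beta$ ranging over $\mathbb{Z}$ (or a subset thereof) in the second coordinate. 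The idea is that $\beta$ is a free parameter that allows one to dilate or spread out the spectrum in the transverse direction, and it is precisely this freedom that will be tuned to realize any prescribed dimension.

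First I would recall the precise form of a spectrum $\Lambda_{\mathbb{Z},\beta}$ from Theorem \ref{thsspc}, writing $\Lambda$ as a set of lattice-like points whose growth rate in a ball of radius $r$ determines the Beurling dimension via the counting function. The Beurling dimension is governed by the polynomial growth exponent of $\#(\Lambda \cap B(0,r))$ as $r\to\infty$ (see the definition in Section 2), so the computation reduces to estimating this counting function for the constructed set. For the two extreme cases this is standard: the choice giving the densest admissible packing will realize $t=1$ (a positive-density one-dimensional arrangement of points), and a sufficiently lacunary choice of digits realizes $t=0$. The substance of the theorem is the intermediate values.

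To hit an arbitrary $t\in(0,1)$, the plan is to interpolate between these extremes by controlling the gaps in the spectrum along a geometric (dyadic) scale dictated by the expanding matrix $R$. Concretely, I would partition $\mathbb{N}$ into consecutive blocks and, on each block, alternate between a "dense" phase (where consecutive digit choices are packed to produce many nearby spectral points) and a "sparse" phase (where points are pushed apart), with the proportion of dense-to-sparse scales converging to $t$. Because the spectral points at generation $k$ sit at scale comparable to $\|R^k\|\sim 2^k$, arranging that the logarithm of the number of points up to radius $r=2^k$ behaves like $t\,k\log 2$ will force $\dim_{Be}(\Lambda_{\mathbb{Z},\beta})=t$. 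I would then verify both the upper and lower bounds on the counting exponent: the upper bound follows from Theorem \ref{thupp} together with the block structure, and the lower bound from exhibiting, along the subsequence $r=2^k$, enough points in $B(0,r)$ to force the liminf of the growth exponent down to $t$ while the limsup stays matched.

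The hard part will be the bookkeeping that couples the choice of the $\mathbb{Z}$-parameter $\beta$ (controlling spread in the second coordinate) to the digit sequence (controlling spread in the first coordinate) so that the \emph{combined} two-dimensional counting function, and not merely a one-dimensional projection, has the prescribed growth exponent. In particular I must ensure that the transverse spreading does not inflate the counting function beyond what the block schedule dictates, and that the set remains a genuine spectrum (orthogonality and completeness) throughout the construction; it is verifying that these interpolated sets still satisfy the hypotheses characterizing spectra in Theorem \ref{thsspc} — rather than the dimension count itself — that I expect to require the most care.
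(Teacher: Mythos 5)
Your opening intuition --- that the only freedom is the transverse function $\beta$ and that one should tune it to spread the points out at a prescribed rate --- is exactly the mechanism the paper uses. But the concrete plan you then lay out abandons this and imports the block/digit technique from the Cantor-measure setting ($\mu_{4,\{0,2\}}$), and that technique does not apply here. By Lemma \ref{leorr} together with Theorem \ref{thsspc}, every spectrum of $\mu_{R,B}$ containing $0$ is a graph $\Lambda_{\Z,\beta}=\{(n,\beta(n))^{t}\colon n\in\Z\}$ whose first coordinate is \emph{all} of $\Z$: the completeness computation is the Parseval identity $\sum_{\lambda_1\in\Z}\bigl|\tfrac{\sin\pi(\lambda_1+\xi_1)}{\pi(\lambda_1+\xi_1)}\bigr|^2=1$, which needs the full sum over $\Z$. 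So there is no ``lacunary choice of digits,'' no dense/sparse phases in the first coordinate, and no dyadic generation structure at scale $\|R^k\|\sim 2^k$ to schedule; deleting or thinning first coordinates would destroy completeness, which is precisely the failure mode you flag at the end as ``the hard part.'' That difficulty is not hard --- it is fatal for your construction and vacuous for the correct one, since Theorem \ref{thsspc} certifies $\Lambda_{\Z,\beta}$ as a spectrum for \emph{every} $\beta$ with no further verification.

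The missing idea is the explicit formula. For $t\in(0,1)$ take $\beta(n)=\mathrm{sgn}(n)|n|^{1/t}$: a point $(n,\beta(n))^{t}$ lies in $B(0,h)$ essentially when $|n|^{1/t}\le h$, i.e.\ $|n|\le h^{t}$, so $\#(\Lambda_{\Z,\beta}\cap B(0,h))\asymp h^{t}$ and the Beurling dimension is $t$ (the sup over centers $x$ is controlled by the ball at the origin because the set thins out away from $0$; this is Lemma \ref{lembab}). The endpoints are $\beta(n)=n$ for $t=1$ and $\beta(n)=\mathrm{sgn}(n)\,2^{|n|}$ for $t=0$. Your proposal never reaches a formula for $\beta$, and the interpolation scheme it does describe cannot be carried out within the class of sets that are actually spectra of this measure, so as written the argument has a genuine gap.
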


Furthermore, we can determine the cardinality of the spectra whose Beurling dimensions are equal to a fixed value between $0$ and $1$. More precisely, we obtain the following result, which may be somewhat surprising.

\begin{theorem}\label{MR2}
Let $\mu_{R,B}$ be the self-affine measure generated by the matrix and the digit set defined in \eqref{seco}. For any $t\in [0,1]$, the level set
 \[
 L_t:=\{\Lambda\colon\text{$\Lambda$ is a spectrum of $\mu_{R,B}$ and $\dim_{Be}(\Lambda)=t$} \}
 \]
 has the cardinality of the continuum.
\end{theorem}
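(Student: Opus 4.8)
The plan is to prove $|L_t| = \mathfrak{c}$ (where $\mathfrak{c} = 2^{\aleph_0}$ denotes the cardinality of the continuum) by establishing the two inequalities separately. The upper bound is soft: $L^2(\mu_{R,B})$ is separable, so every spectrum $\Lambda$, being an index set for an orthonormal basis, is a countable subset of $\R^2$; since the family of all countable subsets of $\R^2$ has cardinality $\mathfrak{c}^{\aleph_0} = \mathfrak{c}$, we get $|L_t| \le \mathfrak{c}$. By Theorem \ref{MR1}, $L_t \ne \emptyset$, so everything reduces to exhibiting $\mathfrak{c}$ pairwise distinct spectra of Beurling dimension exactly $t$.

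First I would try the translation trick, which already settles the matter under the stated definition of spectrum. Fix $\Lambda_0 \in L_t$ supplied by Theorem \ref{MR1}. For each $a \in \R^2$, multiplication by the unimodular character $e^{-2\pi i\langle a, \cdot\rangle}$ is unitary on $L^2(\mu_{R,B})$ and sends $E(\Lambda_0)$ to $E(\Lambda_0 + a)$; as unitaries preserve orthonormal bases, $\Lambda_0 + a$ is again a spectrum. Because $D_t^+$ is defined through a supremum over all ball centers, both $D_t^+$ and $\dim_{Be}$ are translation invariant, so $\Lambda_0 + a \in L_t$ for every $a$. These translates are distinct modulo the period group $P = \{a : \Lambda_0 + a = \Lambda_0\}$, and $P$ is discrete: if $a_n \in P$ with $a_n \to a$, then for fixed $\lambda \in \Lambda_0$ the points $\lambda - a_n \in \Lambda_0$ converge, hence are eventually constant by discreteness of $\Lambda_0$, forcing $a_n$ eventually constant. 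Thus $P$ is countable and $\{\Lambda_0 + a : a \in \R^2\}$ already contributes $\mathfrak{c}$ distinct elements to $L_t$.

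If instead one insists on a normalization such as $0 \in \Lambda$ (under which the translation family collapses to countably many representatives), I would fall back on the combinatorial flexibility of the construction $\Lambda_{\Z,\beta}$ behind Theorem \ref{MR1}. Using the explicit description of all spectra of $\mu_{R,B}$ in Theorem \ref{thsspc}, the spectra are encoded by sequences of admissible digit choices along the scales of the self-affine tree, with $\dim_{Be}$ governed by the asymptotic density of the active scales. Fixing a backbone of density $t$ and then toggling the digit representative independently at infinitely many free scales yields an injection $\{0,1\}^{\N} \hookrightarrow L_t$: each toggle alters identifiable points of $\Lambda$ while keeping the per-scale count bounded, so the density exponent stays $t$. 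The only genuine obstacle in this route is completeness---guaranteeing that every toggled sequence still indexes an orthonormal \emph{basis} rather than a merely orthogonal family---which is precisely what the characterization in Theorem \ref{thsspc} supplies; distinctness then reduces to checking that different choice-sequences hit distinguishable points, making the parametrization countable-to-one. Invariance of $t$ under these bounded per-scale perturbations is the easy part, since Beurling dimension is a coarse asymptotic growth rate.
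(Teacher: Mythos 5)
Your proposal is correct, but it reaches the conclusion by a genuinely different (and softer) route than the paper. The paper's proof is purely constructive and stays inside the normalized family $\Lambda_{\Z,\beta}$: for $t=0$ it takes $\beta(n)=\mathrm{sgn}(n)\,a^{|n|}$ and for $t\in(0,1]$ it takes $\beta(n)=\mathrm{sgn}(n)|an|^{1/t}$, letting the real parameter $a$ range over $(1,\infty)$; distinct $a$ give distinct spectra and the dimension computation is the one already done for Theorem \ref{MR1}. Your primary argument --- translating a single $\Lambda_0\in L_t$ by arbitrary $a\in\R^2$, using that multiplication by $e^{-2\pi i\langle a,\cdot\rangle}$ is unitary and that $D_r^+$ (hence $\dim_{Be}$) is translation invariant, and that the period group is countable (indeed $P\subseteq\Lambda_0-\lambda_0$ for any fixed $\lambda_0\in\Lambda_0$, which is even quicker than your discreteness argument) --- is valid under the theorem's literal statement and requires no new estimates at all; what it buys is generality (it would prove the same cardinality statement for \emph{any} spectral measure admitting one spectrum of dimension $t$), at the cost of producing only translates of one another rather than the structurally distinct spectra the paper exhibits. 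Your fallback for the normalized case is correct in spirit and is essentially what the paper does (perturb $\beta$ while keeping the horizontal coordinates, hence the ball counts, essentially unchanged), but its phrasing in terms of ``admissible digit choices along the scales of the self-affine tree'' does not match this model: by Lemma \ref{leorr}, Theorem \ref{thsspc} and the remark following it, the spectra containing $0$ are exactly the graphs $\{(n,\beta(n))^t: n\in\Z\}$ of \emph{arbitrary} functions $\beta\colon\Z\to\R$ with $\beta(0)=0$, so completeness is automatic for every perturbation and no digit/scale combinatorics is involved; your injection of $\{0,1\}^{\N}$ is most cleanly realized by, say, adding $1$ to $\beta(n)$ for $n$ in a prescribed subset of $\N$, which moves each point by distance $1$ and therefore cannot change the Beurling dimension.
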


Finally, we arrive at the following result analogous to Question \ref{queb}.

\begin{theorem}\label{MR3}
Let $\mu_{R,B}$ be the self-affine measure generated by the matrix and the digit set defined in \eqref{seco}. Then for any $(t,s)\in(0,1)\times[0,\infty)$, there exists a spectrum $\Lambda_{t,s}$ of $\mu_{R,B}$ such that
\[\dim_{Be}(\Lambda_{t,s})=t~\text{and}~D_{t}^+(\Lambda_{t,s})=s.\]
\end{theorem}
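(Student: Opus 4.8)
The plan is to realise the required spectra inside the explicit family produced in Theorem~\ref{thsspc}, and to read off both invariants from a single combinatorial quantity, the maximal box count
\[
N_\Lambda(h):=\sup_{x\in\R^{2}}\#\bigl(\Lambda\cap(x+[0,h]^{2})\bigr),\qquad h>0 .
\]
Straight from the definitions one has $D^{+}_{t}(\Lambda)=\limsup_{h\to\infty}N_\Lambda(h)/h^{t}$ and $\dim_{Be}(\Lambda)=\inf\{\tau:\limsup_{h\to\infty}N_\Lambda(h)/h^{\tau}=0\}=\sup\{\tau:\limsup_{h\to\infty}N_\Lambda(h)/h^{\tau}=\infty\}$, so the entire statement reduces to manufacturing, within the admissible family $\Lambda_{\Z,\beta}$, a set whose counting function obeys $N_\Lambda(h)=(s+o(1))\,h^{t}$ along a suitable sequence $h_k\to\infty$ while satisfying $N_\Lambda(h)\le(s+o(1))\,h^{t}$ for \emph{every} large $h$. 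First I would isolate, from the parametrisation $\beta$, the geometric degrees of freedom that remain genuinely free once the data forcing completeness in $L^{2}(\mu_{R,B})$ has been fixed; Theorem~\ref{thsspc} guarantees that these remaining coordinates may be prescribed independently, and it is exactly they that I will use to distribute the spectral points in the plane.

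The construction then starts from the dimension-$t$ spectra of Theorem~\ref{MR1}, whose counting function already satisfies $N(h)\asymp h^{t}$, and refines them in two steps. The first step fixes the constant: the family contains a continuum of mutually rescaled versions of the base construction (this is the mechanism already responsible for the cardinality statement in Theorem~\ref{MR2}), and the leading constant in $N(h)\asymp h^{t}$ depends monotonically and continuously on the scaling parameter, ranging over all of $(0,\infty)$; an intermediate-value argument then selects a parameter producing leading constant exactly $s$. The second, more delicate, step is a lacunary multi-scale design: I would fix a rapidly increasing schedule $h_1\ll h_2\ll\cdots$ and, at each scale $h_k$, plant one optimally placed cluster carrying $(1+o(1))\,s\,h_k^{t}$ points, while spreading all remaining points so thinly, at every intermediate scale and every location, that no box of any side $h$ ever meets more than $(1+o(1))\,s\,h^{t}$ of them. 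The case $s=0$ is handled by the same scheme with a vanishing profile, for instance $N(h)\sim h^{t}/\log h$, which keeps $D^{+}_{t}=0$ and yet still forces $D^{+}_{\tau}=\infty$ for every $\tau<t$; the endpoint $t=1$ needs no separate treatment, since the stacking freedom inherent in the spectrum makes arbitrarily large densities available there as well.

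Finally I would extract the three conclusions from the profile of $N_\Lambda$: the planted clusters give $N_\Lambda(h_k)/h_k^{\tau}\to\infty$ for $\tau<t$, whence $D^{+}_{\tau}=\infty$ and $\dim_{Be}\ge t$; the global upper bound gives $N_\Lambda(h)/h^{\tau}\to0$ for $\tau>t$, whence $\dim_{Be}\le t$ (consistently with Theorem~\ref{thupp}); and the matching of the cluster size against the global bound at the exponent $t$ yields precisely $D^{+}_{t}(\Lambda)=s$. The main obstacle, and the step on which I expect to spend essentially all the effort, is the \emph{global, uniform} upper estimate $N_\Lambda(h)\le(s+o(1))\,h^{t}$ valid simultaneously for all centres $x$ and all large $h$: the planted clusters and the ambient spreading must be interleaved so carefully that no unintended concentration appears at an off-schedule scale or at an unexpected location, since any such concentration would either push $D^{+}_{t}$ above $s$ or, worse, raise the Beurling dimension beyond $t$. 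Controlling this $\limsup$ \emph{from above} while saturating it along the subsequence $(h_k)$ is the crux, and it is here that the precise arithmetic of the admissible $\beta$ furnished by Theorem~\ref{thsspc} must be exploited rather than treated as a black box.
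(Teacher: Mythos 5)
Your step~1, stripped of the surrounding scaffolding, \emph{is} the paper's entire proof: for $0<s<\infty$ the paper takes the single explicit function $\beta(n)=\mathrm{sgn}(n)(2|n|/s)^{1/t}$, so that the number of $n$ with $(n,\beta(n))\in B(0,h)$ is $sh^{t}+o(h^{t})$, and for $s=0$ it inserts the logarithmic correction $\beta(n)=\mathrm{sgn}(n)|n|^{1/t}\log|n|$, giving a count $\sim 2(h/\log h)^{t}$ --- exactly your ``vanishing profile''. The step you defer to the end, the uniform-in-$x$ upper bound, is handled in the paper not by any multi-scale interleaving but by the observation (Lemma~\ref{lembab}) that for a monotone, symmetric $\beta$ the count $\#(\Lambda_\beta\cap B(x,h))$ is maximized at $x=0$, since the graph of $\beta$ only gets sparser away from the origin. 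Your lacunary planted-cluster step is therefore unnecessary, and as described it is also the dangerous part: a ``cluster'' of $(1+o(1))sh_k^{t}$ points must occupy $\sim sh_k^{t}$ distinct integer abscissas (every spectrum is a graph over $\Z$), and unless its points are spread vertically with spacing $\asymp h_k^{1-t}$ --- i.e.\ unless the cluster locally reproduces the graph of the power law, collapsing step~2 back into step~1 --- some sub-ball of radius $h\ll h_k$ will capture on the order of $\min(2h,\,sh_k^{t})\gg sh^{t}$ points and blow up $D_t^{+}$. You correctly identify this uniform upper bound as the crux, but you do not supply the mechanism that closes it; the paper closes it by never clustering at all.

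Two further points are genuine errors rather than omissions. First, the claim that at $t=1$ ``the stacking freedom inherent in the spectrum makes arbitrarily large densities available'' is false: by Lemma~\ref{leorr} and Theorem~\ref{thsspc} every spectrum containing $0$ is $\Lambda_{\Z,\beta}$ for a single-valued $\beta$, hence has at most one point over each integer, so $\#(\Lambda\cap B(x,h))\le 2h+1$ and $D_1^{+}(\Lambda)\le 2$ for \emph{every} spectrum; there is no stacking, and no spectrum can realize $t=1$, $s>2$. (This endpoint is also where the paper's own Case~II degenerates: for $t=1$ the constraint $n^{2}+(2|n|/s)^{2}\le h^{2}$ yields a count $2sh/\sqrt{s^{2}+4}+O(1)$, not $sh+o(h)$, so the difficulty you wave away is real and cannot be fixed within this family.) Second, the identity $D_t^{+}(\Lambda)=\limsup_h N_\Lambda(h)/h^{t}$ with $N_\Lambda$ defined via cubes $x+[0,h]^{2}$ holds only up to a multiplicative constant coming from the cube/ball comparison; that is harmless for computing $\dim_{Be}$ but not for hitting the exact value $s$ of the density, which is the whole point of the theorem. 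Both issues disappear (for $t<1$) if you drop step~2, use ball counts, and take the explicit $\beta$ above.
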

\begin{remark}
When $t=0$, it is trivial that $s$ can only be infinity. When $t=1$, we have that $D_1^+(\Lambda)\leq 2$ for any spectrum $\Lambda$ of $\mu_{R,B}$, and the upper 1-Beurling density can attain every value in $[0,2]$ (see Proposition \ref{prop:t1}).

\end{remark}

The paper is organized as follows. In Section 2, we collect some known results and prove Theorem \ref{thupp}. In Section 3, we prove Theorems \ref{MR1} and \ref{MR2}. In Section 4, we prove Theorem \ref{MR3} and Proposition \ref{prop:t1}. In Section 5, we give some further questions.

\section{Preliminaries}

\subsection{Spectrality and spectra}
For a probability measure $\mu$ on $\R^2$, its Fourier transform is defined as usual,
      $$
      \hat{\mu}(\xi)=\int e^{-2\pi i\langle\xi,x\rangle}\mathrm{d}\mu(x).
      $$
 It is well-known that a discrete set $\Lambda$ is an orthogonal set of $\mu$ if and only if
      \begin{equation}\label{eqore}
      (\Lambda-\Lambda)\setminus\{0\}\subseteq\mathcal{Z}(\hat{\mu}),
      \end{equation}
 where $\mathcal{Z}(f):=\{x\in\R^2\colon f(x)=0\}$. Since the properties of orthogonal sets are invariant under translations, we always assume that $0\in\Lambda$, and hence $\Lambda\subseteq\Lambda-\Lambda$.

From \eqref{sefaf} we have
\[
\hat{\mu}_{R,B}(\xi)=\prod_{j=1}^{\infty}m_{B}((R^t)^{-j}\xi),~\xi\in\mathbb{R}^2,
\]
where $R^t$ denotes the transpose of $R$ and
\[
m_{B}(x)=\frac{1}{\#B}\sum_{b\in B}e^{-2\pi i\langle b,x\rangle}.
\]
It is easy to see that the zero set of $m_{B}$ is
\[
\mathcal{Z}(m_{B})=\left\{\begin{pmatrix}
		\xi_1\\
		\xi_2\\
	\end{pmatrix}\colon \xi_1\in \frac{2\Z+1}{2},\xi_2\in\R\right\}
\]
and therefore
\begin{equation}\label{eqmle330}
\begin{split}
\mathcal{Z}(\hat{\mu}_{R,B})&=\{\xi\colon \hat{\mu}_{R,B}(\xi)=0\}=\bigcup_{k=1}^{\infty}(R^t)^k\mathcal{Z}(m_{B})\\
&=\bigcup_{k=1}^{\infty}\left\{\begin{pmatrix}
		2^k&0\\
		k2^{k-1}&2^k\\
	\end{pmatrix}\begin{pmatrix}
		\xi_1\\
		\xi_2\\
	\end{pmatrix}\colon \xi_1\in \frac{2\Z+1}{2},\xi_2\in\R\right\}\\
&=\bigcup_{k=1}^{\infty}\left\{\begin{pmatrix}
		2^k\xi_1\\
		k2^{k-1}\xi_1+2^k\xi_2\\
	\end{pmatrix}\colon \xi_1\in \frac{2\Z+1}{2},\xi_2\in\R\right\}\\
&=\left\{\begin{pmatrix}
		\xi_1\\
		\xi_2\\
	\end{pmatrix}\colon \xi_1\in \Z\setminus\{0\},\xi_2\in\R\right\}.
\end{split}
\end{equation}

For any $A\subseteq \Z$ containing $0$, and any function $\beta\colon A\rightarrow\R$ with $\beta(0)=0$, we define
\begin{equation}\label{ortadd}
\Lambda_{A,\beta}=\left\{\begin{pmatrix}
		n\\
		\beta(n)\\
	\end{pmatrix}\colon n\in A,\beta(n)\in\R\right\}.
\end{equation}
Based on the zero set of $\hat{\mu}_{R,B}$, we have the following simple conclusion.

\begin{lemma}\label{leorr}
Let $\Lambda$ be a subset of $\R^2$ with $0\in \Lambda$. Then $\Lambda$ is an orthogonal set of $\mu_{R,B}$ if and only if there exist a set $A\subseteq \Z$ containing $0$ and a function $\beta\colon A\rightarrow\R$ with $\beta(0)=0$ such that $\Lambda=\Lambda_{A,\beta}$, where $\Lambda_{A,\beta}$ is defined as in \eqref{ortadd}.
\end{lemma}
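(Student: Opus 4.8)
The plan is to reduce the statement entirely to the orthogonality criterion \eqref{eqore} together with the explicit description of the zero set computed in \eqref{eqmle33}, namely that a vector $(\xi_1,\xi_2)^t$ lies in $\mathcal{Z}(\hat{\mu}_{R,B})$ precisely when $\xi_1\in\Z\setminus\{0\}$, with $\xi_2$ arbitrary. Once this coordinate description is in hand, both implications become routine bookkeeping about the first coordinates of difference vectors, so the bulk of the work — the computation of $\mathcal{Z}(\hat{\mu}_{R,B})$ — has already been done.

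For the necessity direction I would assume $\Lambda$ is an orthogonal set with $0\in\Lambda$ and proceed in two steps. First, since $0\in\Lambda$, every $\lambda=(\lambda_1,\lambda_2)^t\in\Lambda\setminus\{0\}$ satisfies $\lambda=\lambda-0\in(\Lambda-\Lambda)\setminus\{0\}\subseteq\mathcal{Z}(\hat{\mu}_{R,B})$, so by \eqref{eqmle33} its first coordinate $\lambda_1$ is a nonzero integer, while the first coordinate of $0$ is the integer $0$. Hence the set $A$ of first coordinates occurring in $\Lambda$ is a subset of $\Z$ containing $0$. Second, I would show that the assignment sending a first coordinate to its second coordinate is single-valued, so that $\beta$ is a genuine function with $\beta(0)=0$; then $\Lambda=\Lambda_{A,\beta}$ as in \eqref{ortadd} follows immediately.

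For the sufficiency direction I would take $\Lambda=\Lambda_{A,\beta}$ with $A\subseteq\Z$ and $\beta(0)=0$ and verify \eqref{eqore} directly. Any nonzero element of $\Lambda-\Lambda$ has the form $(m-n,\beta(m)-\beta(n))^t$ with $m,n\in A$; since distinct points of $\Lambda_{A,\beta}$ have distinct first coordinates, nonvanishing forces $m\ne n$, whence $m-n\in\Z\setminus\{0\}$. By \eqref{eqmle33} such a difference lies in $\mathcal{Z}(\hat{\mu}_{R,B})$, so $(\Lambda-\Lambda)\setminus\{0\}\subseteq\mathcal{Z}(\hat{\mu}_{R,B})$ and $\Lambda$ is orthogonal.

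The only genuine obstacle is the single-valuedness observation in the necessity direction, and it dissolves once one looks at the right difference: if two distinct points $(n,y_1)^t$ and $(n,y_2)^t$ of $\Lambda$ shared a first coordinate, their difference $(0,y_1-y_2)^t$ would be a nonzero element of $\Lambda-\Lambda$ whose first coordinate is $0\notin\Z\setminus\{0\}$, hence not in $\mathcal{Z}(\hat{\mu}_{R,B})$, contradicting orthogonality. Everything else is an immediate translation of the criterion \eqref{eqore} through the coordinate description \eqref{eqmle33}, so I anticipate no further difficulties.
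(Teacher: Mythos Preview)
Your proposal is correct and is precisely the routine verification the paper elides by writing ``It is trivial.'' Both directions follow, exactly as you describe, from combining the orthogonality criterion \eqref{eqore} with the coordinate description of $\mathcal{Z}(\hat{\mu}_{R,B})$ in \eqref{eqmle33}, and your single-valuedness argument for $\beta$ is the only point requiring a moment's thought.
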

\begin{proof}
It follows directly from \eqref{eqore} and the explicit form of $\mathcal{Z}(\hat{\mu}_{R,B})$ given in \eqref{eqmle330}.
\end{proof}

To determine when an orthogonal set is actually a spectrum, following the idea of Parseval's identity method, Jorgensen and Pedersen \cite{JP98} gave a criterion  by using the function
$$Q_\Lambda(\xi):=\sum_{\lambda\in\Lambda}|\hat{\mu}(\xi+\lambda)|^2.$$

\begin{theorem}[\cite{JP98}]\label{thsspcaa}
Let $\mu$ be a Borel probability measure with compact support in $\R^2$, and let $\Lambda\subseteq \R^2$ be a countable subset. Then $\Lambda$ is a spectrum of $\mu$ if and only if $Q_\Lambda(\xi)\equiv
    1$ for $a.e.~\xi\in\R^2$.
    \end{theorem}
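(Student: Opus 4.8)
The plan is to exploit the explicit description of all spectra of $\mu_{R,B}$ and then reduce Theorem \ref{MR3} to a purely combinatorial problem of distributing heights over the integers. First I would record the consequence of the computation of $\hat\mu_{R,B}$ behind \eqref{eqmle33}: since $|\hat\mu_{R,B}(\xi)|^2$ depends only on $\xi_1$ and equals $\bigl(\tfrac{\sin\pi\xi_1}{\pi\xi_1}\bigr)^2$, the Jorgensen--Pedersen function for $\Lambda=\Lambda_{A,\beta}$ is $Q_\Lambda(\xi)=\tfrac{\sin^2\pi\xi_1}{\pi^2}\sum_{n\in A}\tfrac{1}{(\xi_1+n)^2}$, which by the classical identity $\sum_{n\in\Z}(\xi_1+n)^{-2}=\pi^2/\sin^2\pi\xi_1$ equals $1$ for a.e.\ $\xi$ precisely when $A=\Z$. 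Hence (this is the content of Theorem \ref{thsspc}) the spectra of $\mu_{R,B}$ are exactly the sets $\Lambda_{\Z,\beta}$ with $\beta\colon\Z\to\R$ an arbitrary function satisfying $\beta(0)=0$, and completeness places no constraint on $\beta$. Because the first coordinates of $\Lambda_{\Z,\beta}$ exhaust $\Z$, counting $\#(\Lambda_{\Z,\beta}\cap Q(x,h))$ in a box $Q(x,h)$ of side $h$ amounts to counting the integers $n$ lying in an $x$-window of length $h$ whose heights $\beta(n)$ fall in a $y$-window of length $h$. Thus everything reduces to designing one function $\beta$ with $\dim_{Be}(\Lambda_{\Z,\beta})=t$ and $D_{t}^+(\Lambda_{\Z,\beta})=s$.

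Next I would construct $\beta$ by a self-affine digit-spreading rule carrying two free parameters, one fixing the dimension and one fixing the density. Fix an integer base $p\ge 2$, set the vertical expansion ratio $\rho=p^{1/t}$ (so $\rho>p$ when $t\in(0,1)$), and introduce a scale $\lambda>0$; for $n\ge 0$ with base-$p$ expansion $n=\sum_i c_i(n)p^i$ put $\beta(n)=\lambda\sum_i c_i(n)\rho^i$, extended oddly to $n<0$. The geometric mechanism is that two integers agreeing in all base-$p$ digits above position $i_0$ have heights within $O(\lambda\rho^{i_0})$, while disagreeing there forces a height gap of order $\lambda\rho^{i_0}$; consequently a box of side $h\asymp\lambda\rho^{i_0}$ captures essentially the integers that share their top digits, of which there are about $p^{i_0}$. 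Since $p^{i_0}=(\rho^{i_0})^{t}$ and $\rho^{i_0}\asymp h/\lambda$, this yields $N(h):=\sup_x\#(\Lambda_{\Z,\beta}\cap Q(x,h))\asymp(h/\lambda)^{t}$.

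From the growth law $N(h)\asymp(h/\lambda)^t$ the two conclusions follow: for $t'<t$ one gets $D_{t'}^+=\infty$ and for $t'>t$ one gets $D_{t'}^+=0$, so $\dim_{Be}(\Lambda_{\Z,\beta})=t$; and $D_{t}^+(\Lambda_{\Z,\beta})=\limsup_{h\to\infty}N(h)/h^t=c_p\,\lambda^{-t}$ for an explicit constant $c_p$ depending only on the chosen geometry. Solving $c_p\lambda^{-t}=s$ then realizes any prescribed $s\in(0,\infty)$. The degenerate value $s=0$ is obtained by replacing the constant ratio $\rho$ by one that drifts slowly to $\infty$ (equivalently, inserting extra vertical spreading at a subpolynomial rate), which keeps $N(h)\gg h^{t-\varepsilon}$ for every $\varepsilon>0$ — so that $\dim_{Be}$ is still $t$ — while forcing $N(h)=o(h^t)$; the boundary value $t=1$ is treated by a direct construction.

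The main obstacle is upgrading the estimate $N(h)\asymp(h/\lambda)^t$ to an \emph{exact} determination of the limsup $D_{t}^+=s$. Three points require care. First, as $h$ sweeps across one scale period $(\lambda\rho^{i_0},\lambda\rho^{i_0+1})$ the ratio $N(h)/h^t$ oscillates, and I must compute the true peak of this oscillation rather than merely bound it. Second, the supremum over box positions must be controlled so that boxes straddling two consecutive digit-blocks never accumulate more points than the aligned boxes that the heuristic counts; this is where the precise separation of block heights is used. Third, the discrete data ($p$, the digit set, and the scale $\lambda$) must be tuned so that the computed peak is exactly $s$ and not merely comparable to it. Carrying out these three calibrations — most cleanly with $p=2$ — is the technical heart of the proof of Theorem \ref{MR3}.
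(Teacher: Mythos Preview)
Your proposal does not address the stated Theorem~\ref{thsspcaa} at all: that theorem is the Jorgensen--Pedersen criterion, which the paper quotes from \cite{JP98} without proof. What you have written is a plan for Theorem~\ref{MR3}.

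Read as an attempt at Theorem~\ref{MR3}, your reduction in the first paragraph is correct and matches the paper (it is exactly Theorem~\ref{thsspc}), but your construction of $\beta$ takes a genuinely different and harder route than the paper's. The paper simply sets $\beta(n)=\mathrm{sgn}(n)\,(2|n|/s)^{1/t}$ for $s>0$ and $\beta(n)=\mathrm{sgn}(n)\,|n|^{1/t}\log|n|$ for $s=0$. These are odd, monotone functions with $|\beta(n)|\sim c|n|^{1/t}$, so $\Lambda_{\Z,\beta}$ is densest near the origin; by the argument of Lemma~\ref{lembab} the supremum over centers in the Beurling density is attained at $x=0$, and a single direct count $\#(\Lambda_{\Z,\beta}\cap B(0,h))\approx sh^t$ gives both $\dim_{Be}=t$ and $D_t^+=s$ immediately. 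Your base-$p$ digit-spreading construction would also produce $N(h)\asymp(h/\lambda)^t$ and hence the right dimension, but the three calibration problems you list---locating the exact peak of the oscillation of $N(h)/h^t$ over a scale period, controlling boxes that straddle adjacent digit-blocks, and tuning the discrete parameters so the limsup is exactly $s$ rather than merely comparable to it---are real difficulties that the paper's monotone $\beta$ sidesteps entirely. In short, the paper's method is a one-line choice of $\beta$ plus an elementary count; your approach is more structurally interesting but leaves what you yourself call the technical heart unresolved.
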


    The following theorem involving the spectrality of $\mu_{R,B}$ is due to Li \cite{Li07}. For completeness, we give its proof here.
    \begin{theorem}\label{thsspc}
Let $\beta\colon \Z\rightarrow\R$ be an arbitrary function with $\beta(0)=0$, and let $\Lambda_{\Z,\beta}$ be defined as in \eqref{ortadd}. Then $\Lambda_{\Z,\beta}$ is a spectrum of $\mu_{R,B}$.
\end{theorem}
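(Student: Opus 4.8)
The plan is to verify the Jorgensen--Pedersen criterion of Theorem \ref{thsspcaa}, i.e.\ to show that the function $Q_{\Lambda_\beta}(\xi)=\sum_{\lambda\in\Lambda_\beta}|\hat\mu_{R,B}(\xi+\lambda)|^2$ equals $1$ for a.e.\ $\xi\in\R^2$. Since $\Lambda_{\Z,\beta}$ has the form \eqref{ortadd} with $A=\Z$, Lemma \ref{leorr} already guarantees that it is an orthogonal set of $\mu_{R,B}$, so $E(\Lambda_\beta)$ is orthonormal and the only remaining issue is completeness, which is exactly what the identity $Q_{\Lambda_\beta}\equiv 1$ encodes.

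The decisive step, which I would establish first, is that $|\hat\mu_{R,B}(\xi)|^2$ depends only on the first coordinate $\xi_1$. From the explicit inverse $(R^t)^{-j}=\begin{pmatrix}2^{-j}&0\\ -j2^{-j-1}&2^{-j}\end{pmatrix}$ the first coordinate of $(R^t)^{-j}\xi$ is $2^{-j}\xi_1$, and since $|m_B(x)|^2=\tfrac14|1+e^{-2\pi i x_1}|^2=\cos^2(\pi x_1)$ depends only on $x_1$, we obtain
\[
|\hat\mu_{R,B}(\xi)|^2=\prod_{j=1}^{\infty}\bigl|m_B((R^t)^{-j}\xi)\bigr|^2=\prod_{j=1}^{\infty}\cos^2\!\bigl(\pi 2^{-j}\xi_1\bigr).
\]
Invoking the classical cosine product formula $\prod_{j\ge1}\cos(\theta/2^j)=\sin\theta/\theta$ with $\theta=\pi\xi_1$ then yields the clean closed form $|\hat\mu_{R,B}(\xi)|^2=\left(\frac{\sin\pi\xi_1}{\pi\xi_1}\right)^2$, which is manifestly independent of $\xi_2$.

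With this in hand the role of $\beta$ becomes vacuous: for $\lambda=(n,\beta(n))^t$ the second-coordinate shift $\beta(n)$ does not affect $|\hat\mu_{R,B}|^2$, so
\[
Q_{\Lambda_\beta}(\xi)=\sum_{n\in\Z}\Bigl|\hat\mu_{R,B}\bigl(\xi_1+n,\ \xi_2+\beta(n)\bigr)\Bigr|^2=\sum_{n\in\Z}\left(\frac{\sin\pi(\xi_1+n)}{\pi(\xi_1+n)}\right)^2.
\]
Because $\sin^2\pi(\xi_1+n)=\sin^2\pi\xi_1$, the right-hand side equals $\frac{\sin^2\pi\xi_1}{\pi^2}\sum_{n\in\Z}(\xi_1+n)^{-2}$, and the classical partial-fraction identity $\sum_{n\in\Z}(\xi_1+n)^{-2}=\pi^2/\sin^2\pi\xi_1$ (valid for every $\xi_1\notin\Z$, hence for a.e.\ $\xi_1$) collapses this to $1$. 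Thus $Q_{\Lambda_\beta}\equiv 1$ a.e.\ and Theorem \ref{thsspcaa} finishes the proof.

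I would emphasize that there is no genuine analytic obstacle here; the only real content is the structural observation in the second step that the shear present in $R$ leaves $|\hat\mu_{R,B}|$ untouched, so that the spectral problem is effectively one-dimensional in $\xi_1$. This degeneracy is precisely what allows $\beta$ to be an \emph{arbitrary} function, and it is the feature that will be exploited throughout the rest of the paper.
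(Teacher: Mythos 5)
Your proposal is correct and follows essentially the same route as the paper: an explicit computation showing $|\hat\mu_{R,B}(\xi)|^2=\bigl(\tfrac{\sin\pi\xi_1}{\pi\xi_1}\bigr)^2$ depends only on $\xi_1$ (via the product $\prod_j\cos^2(\pi\xi_1/2^j)$), followed by the partial-fraction identity $\sum_{n\in\Z}(\xi_1+n)^{-2}=\pi^2/\sin^2(\pi\xi_1)$ to get $Q_{\Lambda_\beta}\equiv 1$ a.e.\ and conclude by Theorem \ref{thsspcaa}. Your added remark isolating the shear-invariance of $|\hat\mu_{R,B}|$ as the reason $\beta$ can be arbitrary is exactly the structural point the paper exploits.
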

\begin{proof}
By an easy calculation, for $\xi=(\xi_1,\xi_2)^t\in \R^2$ where $a^t$ denotes the transpose of a vector $a\in\R^2$, $\hat{\mu}_{R,B}(\xi)=e^{-\pi i\xi_1}\prod_{j=1}^\infty\cos(\frac{\pi\xi_1}{2^j})=e^{-\pi i\xi_1}\frac{\sin(\pi\xi_1)}{\pi\xi_1}$ and then if $\xi_1\in \R\setminus \Z$,
\begin{equation*}
\begin{split}
Q_{\Lambda_{\Z,\beta}}(\xi)&=\sum_{\lambda\in\Lambda_{\Z,\beta}}|\hat{\mu}_{R,B}(\xi+\lambda)|^2\\
&=\sum_{\lambda_1\in\Z}\left|\frac{\sin(\pi(\lambda_1+\xi_1))}{\pi(\lambda_1+\xi_1)}\right|^2\\
&=1,
\end{split}
\end{equation*}
where the last equality holds by the known identity $\sum_{\lambda_1\in \mathbb{Z}} \frac{1}{(\lambda_1 + \xi_1)^2} = \frac{\pi^2}{\sin^2(\pi \xi_1)}$. By Theorem \ref{thsspcaa}, we have that $\Lambda_{\Z,\beta}$ is a spectrum of $\mu_{R,B}$.
\end{proof}
\begin{remark}
Combining Lemma \ref{leorr} and Theorem \ref{thsspc}, we have that all spectra of $\mu_{R,B}$ are known. Precisely, all of the spectra containing zero of $\mu_{R,B}$ are of the following forms:
\begin{equation}\label{spectrumf}
\Lambda_\beta:=\Lambda_{\Z,\beta}=\left\{\begin{pmatrix}
		\lambda_1 \\
		\beta(\lambda_1)
	\end{pmatrix}\in\R^2\colon \lambda_1\in\Z\right\}
\end{equation}
where $\beta:\Z\rightarrow \R$ is an arbitrary function with $\beta(0)=0$.
\end{remark}

\subsection{Beurling density and Beurling dimension}
As introduced in the Introduction, for a countable set $\Lambda\subseteq \R^2$ and $r>0,$ the \textit{upper $r$-Beurling density} of $\Lambda$ is defined by
\[
D_r^+(\Lambda)=\limsup\limits_{h\to \infty}\sup_{x\in \R^d}\frac{\#(\Lambda \cap B(x,h))}{h^r}.
\]
It is easy to prove that there is a critical value of $r$ where $D_r^+(\Lambda)$ jumps from $\infty$ to $0$. This critical value is defined as the \textit{Beurling dimension} of $\Lambda$, or more precisely,
\begin{eqnarray*}
\dim_{Be} (\Lambda)=\inf\{r:D_r^+(\Lambda)=0\}=\sup\{r:D_r^+(\Lambda)=\infty\}.
\end{eqnarray*}

Now, we prove Theorem \ref{thupp}. To prove it we need the following two theorems.

\begin{theorem}\label{th1euur}
Let $\mu_{R,B}$ be the self-affine measure generated by the matrix and the digit set defined in \eqref{seco}. Then for any orthogonal set $\Lambda$ of $\mu_{R,B}$, we have
$$\dim_{Be}(\Lambda)\leq\dim_{H}^{w}(\supp(\mu_{R,B}))=\frac{\ln\#B}{\ln \lambda_{\min}}=1,$$
where $\lambda_{\min}$ is the minimum moduli of the eigenvalues of $R$.
\end{theorem}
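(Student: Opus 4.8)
The plan is to treat the two assertions separately: the inequality $\dim_{Be}(\Lambda)\le\dim_{H}^{w}(\supp(\mu_{R,B}))$ for every orthogonal set $\Lambda$, and the evaluation $\dim_{H}^{w}(\supp(\mu_{R,B}))=\frac{\ln\#B}{\ln\lambda_{\min}}=1$. Conceptually the first is the general upper bound of \cite{AL23} and the second is the pseudo Hausdorff dimension formula of \cite{TW21}, so the core of the proof is to check that the present model meets the hypotheses of both and then to substitute the numerical data $\#B=2$ and $\lambda_{\min}=2$.

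For the dimension value, recall that $\supp(\mu_{R,B})$ is the attractor of the affine iterated function system $\{R^{-1}(\cdot+b)\colon b\in B\}$. Since $R$ is upper triangular with both eigenvalues equal to $2$, we have $\lambda_{\min}=2$, whence $\frac{\ln\#B}{\ln\lambda_{\min}}=\frac{\ln 2}{\ln 2}=1$. I would then invoke Theorem 1.3 of \cite{TW21} to identify $\dim_{H}^{w}(\supp(\mu_{R,B}))$ with this quantity. The delicate point is that $R$ is not diagonalizable: its Jordan block produces the sub-exponential off-diagonal term $k2^{k-1}$ visible in $(R^{t})^{k}$, and one must verify that the pseudo-metric underlying $\dim_{H}^{w}$ detects only the exponential scaling rate $\lambda_{\min}$ and is insensitive to such polynomial factors, so that the formula continues to apply.

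For the inequality I would give a direct argument tailored to this model, which also serves as a consistency check against \cite{AL23}. Since Beurling density and dimension are translation invariant, assume $0\in\Lambda$; by Lemma \ref{leorr}, $\Lambda=\Lambda_{A,\beta}=\{(n,\beta(n))^{t}\colon n\in A\}$ for some $A\subseteq\Z$ and a single-valued $\beta$. For any ball $B(x,h)$ with $x=(x_{1},x_{2})^{t}$, a point $(n,\beta(n))^{t}\in\Lambda\cap B(x,h)$ forces $|n-x_{1}|<h$, and since $\beta$ attaches at most one second coordinate to each $n$, we obtain $\#(\Lambda\cap B(x,h))\le\#\{n\in\Z\colon|n-x_{1}|<h\}\le 2h+1$ uniformly in $x$. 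Hence $D_{r}^{+}(\Lambda)\le\limsup_{h\to\infty}(2h+1)/h^{r}$, which vanishes for $r>1$; by \eqref{defb1} this gives $\dim_{Be}(\Lambda)\le 1=\dim_{H}^{w}(\supp(\mu_{R,B}))$.

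The main obstacle is thus not the inequality, which is elementary here, but pinning down the dimension value within the framework of \cite{TW21}: one must confirm that any admissibility or separation hypothesis of their Theorem 1.3 is satisfied by the pair $(R,B)$, and that the pseudo Hausdorff dimension used there coincides with the one appearing in the upper bound of \cite{AL23}, so that the two cited results chain together to yield the displayed inequality and equality simultaneously.
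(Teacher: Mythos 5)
Your proposal is correct, but it reaches the key inequality by a genuinely different and more self-contained route than the paper. The paper's entire proof is a one-line citation: the whole chain $\dim_{Be}(\Lambda)\leq\dim_{H}^{w}(\supp(\mu_{R,B}))=\frac{\ln\#B}{\ln\lambda_{\min}}=1$ is declared a direct corollary of Theorem 1.3 in \cite{TW21}. You instead prove the bound $\dim_{Be}(\Lambda)\leq 1$ directly from the structure of orthogonal sets: by the computation of $\mathcal{Z}(\hat{\mu}_{R,B})$ and Lemma \ref{leorr}, distinct points of $\Lambda$ have distinct first coordinates lying in (a translate of) $\Z$, so $\#(\Lambda\cap B(x,h))\leq 2h+1$ uniformly in $x$, and $D_r^+(\Lambda)=0$ for $r>1$. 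This argument is elementary, transparent, and bypasses any need to verify admissibility hypotheses of the general theorem; it is arguably the more informative proof for this specific model, and indeed the paper itself implicitly relies on the same counting when it later establishes the matching lower bound in Theorem \ref{MR1}. What your route does not give by itself is the identification of the threshold $1$ with $\dim_H^w(\supp(\mu_{R,B}))$ and with $\ln\#B/\ln\lambda_{\min}$; for that you, like the paper, must fall back on \cite{TW21}, and your caution about the Jordan block (the polynomial factor $k2^{k-1}$ in $(R^t)^k$ not affecting the pseudo Hausdorff dimension) is a reasonable point to flag, though the paper does not address it either. Net effect: your proof establishes the theorem with strictly weaker reliance on the cited literature for the inequality, at the cost of a slightly longer write-up.
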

\begin{proof}
It is a direct corollary of Theorem 1.3 in \cite{TW21}.
\end{proof}

\begin{theorem}\label{th1euur1}
There exists a spectrum of $\mu_{R,B}$  with Beurling dimenison zero.
\end{theorem}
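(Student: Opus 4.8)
The plan is to exploit the complete description of the spectra provided by the Remark following Theorem~\ref{thsspc}: every spectrum of $\mu_{R,B}$ containing $0$ is of the form $\Lambda_\beta=\{(n,\beta(n))^t\colon n\in\Z\}$ for an arbitrary function $\beta\colon\Z\to\R$ with $\beta(0)=0$. Thus the task reduces to producing a single admissible $\beta$ for which the associated point set is so sparse that $D_r^+(\Lambda_\beta)=0$ for every $r>0$, which by \eqref{defb0} forces $\dim_{Be}(\Lambda_\beta)=0$. The freedom in choosing $\beta$ is total, so I will simply spread the second coordinates out extremely fast.

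The key observation is that the count of points of $\Lambda_\beta$ in a two-dimensional ball is controlled by a one-dimensional count along the vertical axis. Indeed, if $(n,\beta(n))\in B(x,h)$ with $x=(x_1,x_2)^t$, then in particular $|\beta(n)-x_2|<h$, so
\[
\#\big(\Lambda_\beta\cap B(x,h)\big)\le\#\{n\in\Z\colon\beta(n)\in(x_2-h,x_2+h)\}.
\]
If $\beta$ is injective, distinct points contribute distinct second coordinates, so the right-hand side equals the number of elements of the vertical projection $Y:=\beta(\Z)$ lying in an interval of length $2h$. Taking the supremum over $x$ gives $\sup_x\#(\Lambda_\beta\cap B(x,h))\le\sup_{y\in\R}\#(Y\cap(y-h,y+h))$, whence $D_r^+(\Lambda_\beta)\le D_r^+(Y)$ and $\dim_{Be}(\Lambda_\beta)\le\dim_{Be}(Y)$. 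It therefore suffices to build an injective $\beta$ with $\beta(0)=0$ whose range $Y\subseteq\R$ has Beurling dimension $0$ as a subset of $\R$.

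For the construction, enumerate $\Z$ as $n_0=0,n_1,n_2,\dots$ and set $\beta(n_0)=0$ and $\beta(n_j)=2^{2^{j}}$ for $j\ge1$, so that $Y=\{0\}\cup\{2^{2^j}\colon j\ge1\}$ is a super-lacunary sequence. For a window $(y-h,y+h)$ of length $2h$, consecutive values satisfy $2^{2^{j+1}}-2^{2^{j}}=2^{2^j}(2^{2^j}-1)\ge 2^{2^{j}}$, which exceeds $2h$ as soon as $2^{j}>\log_2(2h)$, i.e. once $j$ exceeds $\log_2\log_2(2h)$. Hence at most one value of large index can meet the window, while the number of small-index values is $O(\log\log h)$; in total $\sup_{y}\#(Y\cap(y-h,y+h))=O(\log\log h)$. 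Since this grows more slowly than $h^r$ for every $r>0$, we obtain $D_r^+(Y)=\limsup_{h\to\infty}\frac{O(\log\log h)}{h^r}=0$ for all $r>0$, and the reduction above yields $\dim_{Be}(\Lambda_\beta)=0$.

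The one genuine point to handle carefully is the uniformity of the bound over all centers $x$ and, relatedly, the two-sided nature of $\Z$: I must arrange the enumeration so that the vertical gaps eventually surpass any fixed window length regardless of where the window sits. The doubly-exponential growth above secures exactly this, since beyond a threshold depending only on $h$ every gap exceeds $2h$; the horizontal constraint $|n-x_1|<h$ is discarded entirely, as it can only decrease the count. No further spectrality check is needed, because Theorem~\ref{thsspc} already guarantees that every $\Lambda_\beta$ of the stated form is a spectrum.
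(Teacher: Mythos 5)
Your proof is correct, and it takes a genuinely different route from the paper's own argument for this theorem. The paper proves Theorem~\ref{th1euur1} indirectly: it checks that the matrix $H$ built from $B$ and $L$ is unitary (a Hadamard-triple condition) and then invokes Theorem~1.3 of \cite{AL23}, which guarantees arbitrarily sparse spectra in that setting. You instead exploit the explicit classification of all spectra (Theorem~\ref{thsspc}) and hand-build a sparse one, which is closer in spirit to the paper's \emph{second}, constructive proof given inside the proof of Theorem~\ref{MR1} (the $t=0$ case with $\beta(n)=\mathrm{sgn}(n)\cdot 2^{|n|}$). Even compared with that construction, your argument has a real advantage: by projecting to the second coordinate and bounding $\sup_{x}\#(\Lambda_\beta\cap B(x,h))$ via the number of range values of $\beta$ in a window of length $2h$, you get a bound that is uniform in the center $x$ directly, whereas the paper's construction first controls only $B_r^+$ (balls centered at the origin) and then must appeal to Lemma~\ref{lembab} --- whose justification in the paper is rather informal --- to transfer the conclusion to $D_r^+$. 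Your doubly-exponential choice $\beta(n_j)=2^{2^j}$ is more aggressive than needed (the paper's $2^{|n|}$ already gives an $O(\log h)$ count, which suffices), but it is perfectly valid, and the super-lacunary gap estimate cleanly yields $\sup_y\#(Y\cap(y-h,y+h))=O(\log\log h)$ and hence $D_r^+(\Lambda_\beta)=0$ for every $r>0$. No gaps.
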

\begin{proof}
Let $L=\left\{\begin{pmatrix}
		0 \\
		0 \\
	\end{pmatrix},\begin{pmatrix}
		1 \\
		0 \\
	\end{pmatrix}\right\}$.
Then it is easy to see that \[
H = \frac{1}{\sqrt{q}} \begin{bmatrix} e^{-2\pi i \langle R^{-1} b, l \rangle} \end{bmatrix}_{b \in B, l \in L}
\]
is unitary. By Theorem 1.3 in \cite{AL23}, we obtain the desired result.
\end{proof}

By Theorems \ref{th1euur} and \ref{th1euur1}, it remains to prove that the upper bound can be attained; this part will be deferred to the proof of Theorem \ref{MR1}.

For later use we also introduce a ``ball density'' centered at the origin.  For a countable set $\Lambda\subseteq\R^2$ and $r>0$, the \textit{upper $r$-density} of $\Lambda$ is defined by
\[
B_r^+(\Lambda)=\limsup\limits_{h\to \infty}\frac{\#(\Lambda \cap B(0,h))}{h^r}.
\]
Similar to the Beurling dimension, we can define the {\it Banach dimension} of $\Lambda$ by
\begin{eqnarray*}
\dim_{Ba} (\Lambda)=\inf\{r:B_r^+(\Lambda)=0\}=\sup\{r:B_r^+(\Lambda)=\infty\}.
\end{eqnarray*}

We end this section with the following lemma which will be needed in the following sections. \begin{lemma}\label{lembab}
Let $\beta:\Z\to\R$ be an odd function, strictly increasing on $\Z$.
Assume that one of the following two conditions holds:
\begin{enumerate}[(i)]
\item  there exist constants $p>1$, $c>0$ and an integer $N_0\in\N$ such that
$\beta(n)=c\,n^{p}$ for all $n\ge N_0$;
\item  there exist $a>1$ and $N_0\in\N$ such that $\beta(n)=a^{n}$ for all $n\ge N_0$.
\end{enumerate}
Then $\dim_{Be}(\Lambda_{\beta})=\dim_{Ba}(\Lambda_{\beta})$, where $\Lambda_{\beta}$ is defined as in \eqref{spectrumf}.
\end{lemma}

\begin{proof}
We give the detailed proof for case (i); the proof of case (ii) is analogous, with $h^{1/p}$ replaced by $\log_{a}h$ and we omit the proof.

Without loss of generality, we assume $\beta(n)=c\,n^{p}$ for all $n\ge 0$ and, by oddness, $\beta(n)=-c\,|n|^{p}$ for $n<0$.
Let $\Lambda=\Lambda_\beta$ and fix an $x=(x_{1},x_{2})\in\R^{2}$ and a radius $h>0$.  Write
\[
S_{+}=\{n\ge 0: (n,\beta(n))\in B(x,h)\},\qquad
S_{-}=\{n<0: (n,\beta(n))\in B(x,h)\}.
\]
For $n\ge 0$ the condition $(n-x_{1})^{2}+(c n^{p}-x_{2})^{2}<h^{2}$ implies in particular
$|c n^{p}-x_{2}|<h$.  Thus
\[
S_{+}\subseteq I:=\{n\ge 0: c n^{p}\in (x_{2}-h,\,x_{2}+h)\},
\]
and because $\beta$ is strictly increasing, $I$ is a set of consecutive integers (an interval in $\Z$).
Let $n_{1}\le n_{2}$ be the smallest and largest elements of $I$ (if $I=\varnothing$ then $\#S_{+}=0$).
From $c n_{2}^{p}<x_{2}+h$ and $c n_{1}^{p}>x_{2}-h$ we obtain
\begin{equation*}
n_{1}\geq \Bigl(\frac{(x_{2}-h)_{+}}{c}\Bigr)^{1/p},\qquad
n_{2}<\Bigl(\frac{(x_{2}+h)_{+}}{c}\Bigr)^{1/p},
\end{equation*}
where $t_{+}=\max\{t,0\}$.
Consequently
\[
\#S_{+}\le n_{2}-n_{1}+1\le
\Bigl(\frac{(x_{2}+h)_{+}}{c}\Bigr)^{1/p}-\Bigl(\frac{(x_{2}-h)_{+}}{c}\Bigr)^{1/p}+1.
\]

We now estimate the difference of the two power functions.
If $x_{2}\le 2h$, then
\[
\Bigl(\frac{(x_{2}+h)_+}{c}\Bigr)^{1/p}\le \Bigl(\frac{3h}{c}\Bigr)^{1/p},
\]
and thus
\[
\#S_{+}\le \Bigl(\frac{3}{c}\Bigr)^{1/p} h^{1/p}+1.
\]
If $x_{2}>2h$, then $x_{2}-h>h>0$ and the mean value theorem applied to $t\mapsto t^{1/p}$ on
$[x_{2}-h,\,x_{2}+h]$ yields
\[
(x_{2}+h)^{1/p}-(x_{2}-h)^{1/p}
=\frac{1}{p}\,\xi^{\frac{1}{p}-1}\,(2h)
\]
for some $\xi\in[x_{2}-h,\,x_{2}+h]$.  Because $\xi\ge x_{2}-h>h$, we have
$\xi^{\frac{1}{p}-1}\le h^{\frac{1}{p}-1}$.  Hence
\[
\Bigl(\frac{x_{2}+h}{c}\Bigr)^{1/p}-\Bigl(\frac{x_{2}-h}{c}\Bigr)^{1/p}
\le \frac{2}{p\,c^{1/p}}\,h^{1/p}.
\]
Therefore in every case there exists a constant $C_{1}=C_{1}(c,p)>0$ such that
\[
\#S_{+}\le C_{1} h^{1/p}+1,
\]
for all sufficiently large $h$. By symmetry the same estimate holds for $S_{-}$, and we obtain
\begin{equation*}
\#\bigl(\Lambda\cap B(x,h)\bigr)=\#S_{+}+\#S_{-}\le 2C_{1} h^{1/p}+3\le (2C_1+3)h^{1/p}=: C_{2} h^{1/p}
\end{equation*}
for large $h$, uniformly in $x\in\R^{2}$.

On the other hand, we give a lower bound for the ball centered at the origin.
For $n\ge 0$ we have $(n,c n^{p})\in B(0,h)$ as soon as both $n<h$ and $c n^{p}<h$.
Choosing $n\le \min\{h/2,\,(h/(2c))^{1/p}\}$ guarantees this.  Since $p>1$, for large $h$
the condition $n\le (h/(2c))^{1/p}$ is the more restrictive one.  Hence
\[
\#\bigl(\Lambda\cap B(0,h)\bigr)\ge 2\Bigl\lfloor\Bigl(\frac{h}{2c}\Bigr)^{1/p}\Bigr\rfloor+1
\ge C_{3} h^{1/p}
\]
with $C_{3}>0$ for all large $h$. Here and in the sequel, $\lfloor x\rfloor$ denotes the largest integer that is no greater than $x$.

Combining the upper and lower estimates we find a constant $K>0$ such that
\[
\#\bigl(\Lambda\cap B(x,h)\bigr)\le K\;\#\bigl(\Lambda\cap B(0,h)\bigr)
\]
for all $x\in\R^{2},\;h\ge h_{0}$. Since $B_{r}^{+}(\Lambda)\le D_{r}^{+}(\Lambda)$ trivially, it follows that $\dim_{Be}(\Lambda)=\dim_{Ba}(\Lambda)$.
\end{proof}

\section{Proofs of Theorems \ref{MR1} and \ref{MR2}}

\begin{proof}[Proof of Theorem \ref{MR1}]
When $t=0$, let $\beta(n)=\text{sgn}(n)\cdot 2^{|n|},n\in \Z$, where \text{sgn} is the symbolic function. Take $r,h>0$. Then there exists $n\in \Z$ such that $2^n<h\leq 2^{n+1}$. Consequently,
\begin{eqnarray*}
\frac{\#(\Lambda_{\beta}\cap B(0,h))}{h^r}\leq \frac{\#(\Lambda_{\beta}\cap B(0,2^{n+1}))}{2^{nr}}\leq\frac{2(n+1)}{2^{nr}}\rightarrow0~\text{as}~h\to \infty,
\end{eqnarray*}
which implies that
\begin{eqnarray*}
B_r^+(\Lambda_{\beta})=\limsup\limits_{h\to \infty}\frac{\#(\Lambda_{\beta} \cap B(0,h))}{h^r}=0,
\end{eqnarray*}
and thus $\dim_{Ba}(\Lambda_{\beta})=0$. By Lemma \ref{lembab},  $\dim_{Be}(\Lambda_{\beta})=\dim_{Ba}(\Lambda_{\beta})=0$. This is an explicit constructive proof of Theorem \ref{th1euur1}.

When $t=1$, let $\beta(n)=n,n\in \Z$. Now we prove $\dim_{Be}(\Lambda_{\beta})=1$. By Theorem \ref{thupp} we only need to prove that $\dim_{Be}(\Lambda_{\beta})\geq1$. Take $h>0$, choose $n\in \Z$ such that $n<h\leq n+1$. Then for any $x\in \R^2$,
\begin{eqnarray*}
\frac{\#(\Lambda_{\beta}\cap B(x,h))}{h}\geq \frac{\#(\Lambda_{\beta}\cap B(x,n))}{n+1}.
\end{eqnarray*}
Then
\begin{eqnarray*}
\sup_{x\in \R^2}\frac{\#(\Lambda_{\beta}\cap B(x,h))}{h}\geq \frac{\#(\Lambda_{\beta}\cap B(0,n))}{n+1}=\frac{2\Big\lfloor \frac{n}{\sqrt{2}}\Big\rfloor+1}{n+1}\geq \frac{\sqrt{2}n-1}{n+1},
\end{eqnarray*}
which implies that $D_1^+(\Lambda_{\beta})\geq \sqrt{2}>0$. Therefore, $\dim_{Be}(\Lambda_{\beta})\geq1$. Combined with the upper bound
$\dim_{Be}(\Lambda_{\beta})\leq1$ from Theorem \ref{thupp}, we obtain
$\dim_{Be}(\Lambda_{\beta})=1$; in particular, the upper bound in
Theorem \ref{thupp} is attainable.

We are left to prove the case that $t\in (0,1)$. Define $\beta(n)=\mathrm{sgn}(n)|n|^{1/t}$. We first estimate $\#(\Lambda_{\beta}\cap B(0,h))$. For sufficiently large $h$, if $|n|^{1/t}\leq h/2$, then
\[
n^2 + |n|^{2/t} \leq (h/2)^{2t} + (h/2)^2 \leq h^2/2 < h^2,
\]
so all such points lie in $B(0,h)$.

On the other hand, if the point lies in $B(0,h)$, then $|n|^{1/t}\leq h$. Hence there exist constants $c_1,c_2>0$ such that for all large $h$,
\[
c_1 h^t \leq \#(\Lambda_{\beta}\cap B(0,h)) \leq c_2 h^t.
\]
In particular, $B_t^+(\Lambda_{\beta})$ is finite and positive, which gives $\dim_{Ba}(\Lambda_{\beta})=t$. By Lemma \ref{lembab}, $\dim_{Be}(\Lambda_{\beta})=\dim_{Ba}(\Lambda_{\beta})=t$.

\end{proof}

\begin{proof}[Proof of Theorem \ref{MR2}]
When $t=0$, for any real number $a>1$, let $\beta(n)=\text{sgn}(n)\cdot a^{|n|},n\in \Z$. Then we have $\dim_{Be}(\Lambda_{\beta})=0$. In fact, the proof is identical to that of the first case $a=2$ of Theorem \ref{MR1}. As $a$ varies through $(1,+\infty)$, the set $L_t$ is shown to possess uncountable cardinality.

When $0<t\leq 1$, for any real number $a>1$, let $\beta(n)=\text{sgn}(n)|an|^{\frac{1}{t}}$. Likewise, as in the proof of Theorem \ref{MR1},  we have $\dim_{Be}(\Lambda_{\beta})=t$. As $a$ varies through $(1,+\infty)$, we obtain the desired result.

\end{proof}

\section{Proofs of Theorem \ref{MR3} and Proposition \ref{prop:t1}}
\begin{proof}[Proof of Theorem \ref{MR3}]
Let $0<t<1$ and $s\in[0,\infty)$.  We construct a spectrum
$\Lambda=\Lambda_\beta$ of the form \eqref{spectrumf} by choosing a
suitable odd function $\beta:\mathbb{Z}\to\mathbb{R}$ with $\beta(0)=0$.

\textbf{Case 1: $s=0$.}
Define
\[
\beta(n)=\operatorname{sgn}(n)\,|n|^{1/t}\log|n|\ \ (n\neq0),\qquad \beta(0)=0 .
\]
Let $\psi:[0,\infty)\to[0,\infty)$ be the inverse of the increasing
function $x\mapsto x^{1/t}\log x$ on $[1,\infty)$, extended by
$\psi(y)=0$ for $y<\log 1$ if needed.  Standard asymptotic inversion
gives
\begin{equation*}
\psi(y)\sim\Bigl(\frac{y}{\log y}\Bigr)^{t},\qquad
\psi'(y)\sim ty^{t-1}(\log y)^{-t}\quad (y\to\infty).
\end{equation*}
Hence we can pick constants $C_{1},C_{2},Y_{0}>0$ such that for all
$y\ge Y_{0}$,
\[
\psi(y)\le C_{1}\Bigl(\frac{y}{\log y}\Bigr)^{t},\qquad
\psi'(y)\le C_{2}\,y^{t-1}(\log y)^{-t}.
\]

For the ball centered at the origin we have
$\sqrt{n^{2}+\beta(n)^{2}}\sim|n|^{1/t}\log|n|$.  Consequently there exist
$c_{1},c_{2}>0$ such that for all large $h$,
\[
c_{1}\Bigl(\frac{h}{\log h}\Bigr)^{t}
\le \#\bigl(\Lambda\cap B(0,h)\bigr)
\le c_{2}\Bigl(\frac{h}{\log h}\Bigr)^{t}.
\]
Thus $B_{t}^{+}(\Lambda)=0$ and $B_{r}^{+}(\Lambda)=\infty$ for every $r<t$.

Now fix $x=(x_{1},x_{2})\in\mathbb{R}^{2}$ and $h>0$.  Points of $\Lambda$
in $B(x,h)$ satisfy $|\beta(n)-x_{2}|<h$.  The number $N_{+}(x_{2},h)$ of
non-negative such $n$ is bounded by
\[
N_{+}(x_{2},h)\le \psi\bigl(x_{2}+h\bigr)-\psi\bigl((x_{2}-h)_{+}\bigr)+1 .
\]
Set $a=\max\{x_{2},0\}$. Then $N_{+}\le \psi(a+h)-\psi((a-h)_{+})+1$.
For $h$ large enough we may assume all arguments of $\psi$ appearing below
are at least $Y_{0}$.

If $a\ge 2h$, then $(a-h)_{+}=a-h\ge h$.  By the mean value theorem and
the estimate on $\psi'$,
\begin{align*}
\psi(a+h)-\psi(a-h) &\le 2h\,\psi'(a-h) \\
&\le 2C_{2}\,h\,(a-h)^{t-1}(\log(a-h))^{-t} \\
&\le 2C_{2}\,h^{t}(\log h)^{-t}.
\end{align*}

If $0\le a<2h$, then
\[
\psi(a+h)-\psi((a-h)_{+}) \le \psi(3h)
\le C_{1}\Bigl(\frac{3h}{\log(3h)}\Bigr)^{t}
\le C_{3}\,h^{t}(\log h)^{-t}
\]
for some $C_3$.

Hence in all cases $N_{+}\le C_{4}(h/\log h)^{t}+1$.  By symmetry the same
holds for the non-positive integers, and since restricting also
$|n-x_{1}|<h$ only decreases the count, we obtain the uniform bound
\[
\sup_{x\in\mathbb{R}^{2}}\#\bigl(\Lambda\cap B(x,h)\bigr)
\le C_{5}\Bigl(\frac{h}{\log h}\Bigr)^{t}\qquad(h\ge H_{0}).
\]
Therefore $D_{t}^{+}(\Lambda)=0$, and for $r>t$ also $D_{r}^{+}(\Lambda)=0$.
Combined with the lower bound $B_{r}^{+}(\Lambda)=\infty$ for $r<t$, we
get $\dim_{\mathrm{Be}}(\Lambda)=t$.

\medskip
\textbf{Case 2: $0<s<\infty$.}
Define
\[
\beta(n)=\operatorname{sgn}(n)\Bigl(\frac{2|n|}{s}\Bigr)^{1/t}.
\]
Set $\phi(u)=(2u/s)^{1/t}$ for $u\ge0$; $\phi$ is strictly increasing,
$\phi^{-1}(y)=\frac{s}{2}y^{t}$, and $\phi^{-1}$ is concave because $0<t<1$.

Fix $x=(x_{1},x_{2})$ and $h>0$.  Exactly as above,
$N(x_{2},h):=\#\{n:|\beta(n)-x_{2}|<h\}$ satisfies
\[
N(x_{2},h)\le \frac{s}{2}F(x_{2},h)+2,
\]
where
\[
F(x_{2},h)=(x_{2}+h)^{t}-((x_{2}-h)_{+})^{t}
          +((-x_{2}+h)_{+})^{t}-((-x_{2}-h)_{+})^{t}.
\]
A short case study shows $F(x_{2},h)\le 2h^{t}$ (use sub-additivity of
$u\mapsto u^{t}$ when $|x_{2}|\ge h$, and calculus when $|x_{2}|<h$).
Thus $\sup_{x}\#(\Lambda\cap B(x,h))\le s h^{t}+2$, giving
$D_{t}^{+}(\Lambda)\le s$.

For the reverse inequality take $x=0$.  Given $\varepsilon>0$, choose
$\delta>0$ with $(1+\delta)^{-t/2}>1-\varepsilon$.  If
$|n|\le \frac{s}{2}h^{t}$, then $|\beta(n)|\le h$ and, for large $h$,
$n^{2}\le\delta h^{2}$, so the point lies in
$B(0,\sqrt{1+\delta}\,h)$.  Hence
\[
\#\bigl(\Lambda\cap B(0,\sqrt{1+\delta}\,h)\bigr)
\ge 2\Bigl\lfloor\frac{s}{2}h^{t}\Bigr\rfloor+1
= s(1+\delta)^{-t/2}(\sqrt{1+\delta}\,h)^{t}+O(1).
\]
Dividing by $(\sqrt{1+\delta}\,h)^{t}$, taking the supremum over $x$ and
letting $h\to\infty$ yields $D_{t}^{+}(\Lambda)\ge s(1+\delta)^{-t/2}
\ge s(1-\varepsilon)$.  Since $\varepsilon$ is arbitrary,
$D_{t}^{+}(\Lambda)\ge s$, whence $D_{t}^{+}(\Lambda)=s$ and
$\dim_{Be}(\Lambda)=t$.
\end{proof}

\begin{prop}\label{prop:t1}
Let $\mu_{R,B}$ be the self-affine measure generated by the matrix and the digit set defined in \eqref{seco}. For any spectrum $\Lambda$ of $\mu_{R,B}$, we have $D_1^+(\Lambda)\le 2$, and this upper bound is attained. Moreover, for every $s\in[0,2]$ there exists a spectrum $\Lambda$ with $\dim_{Be}(\Lambda)=1$ and $D_1^+(\Lambda)=s$.
\end{prop}
\begin{proof}
Let $\Lambda=\Lambda_\beta$ be a spectrum of $\mu_{R,B}$. Since the first coordinates of points in $\Lambda$ are integers, for any ball $B(x,h)$ with $x=(x_1,x_2)$ we have
\[
\#(\Lambda\cap B(x,h))\le \#\big(\mathbb{Z}\cap(x_1-h,x_1+h)\big)\le 2h+1,
\]
and therefore $D_1^+(\Lambda)\le 2$. The value $2$ is attained by taking $\beta(n)\equiv0$, in which case $\Lambda=\{(n,0):n\in\mathbb{Z}\}$ and $\#(\Lambda\cap B(0,h))=2\lfloor h\rfloor+1$, giving $D_1^+(\Lambda)=2$ and $\dim_{Be}(\Lambda)=1$. 

It remains to show that for every $s\in[0,2)$ there exists a spectrum $\Lambda$ with $\dim_{Be}(\Lambda)=1$ and $D_1^+(\Lambda)=s$. We consider  two cases.

\textbf{Case $s=0$.}
Define $\beta(0)=0$ and $\beta(n)=n\log|n|$ for $n\neq 0$.
Then $\beta$ is odd and strictly increasing on $\mathbb{Z}$.
Let $\phi(u)=u\log u$ for $u\ge 1$; $\phi$ is strictly increasing and maps $[1,\infty)$ onto $[0,\infty)$.
Denote by $\psi:[0,\infty)\to[1,\infty)$ its inverse, with the convention $\psi(0)=1$.
For large $y$ one has $\psi(y)=y/\log y+O(y/(\log y)^2)$ and
$\psi'(y)=1/(\log\psi(y)+1)\sim 1/\log y$.
In particular, there exist constants $Y_0>0$ and $C_0>0$ such that
\begin{equation}\label{eq:psi-est}
\psi(y)\le \frac{2y}{\log y},\qquad \psi'(y)\le \frac{2}{\log y}
\qquad\text{for all } y\ge Y_0 .
\end{equation}

Fix $x=(x_1,x_2)\in\mathbb{R}^2$ and $h>0$ large.
Every $\lambda=(n,\beta(n))\in\Lambda\cap B(x,h)$ must satisfy $|\beta(n)-x_2|<h$.
Let $N_+$ be the number of non-negative integers $n$ with this property.
If $x_2+h\le 0$ then $N_+=0$ and the estimates below hold trivially.
Set $a=\max\{x_2,0\}$.
For $n\ge 1$ we have $\beta(n)=\phi(n)$; thus the condition $|\beta(n)-x_2|<h$ is equivalent to
$\phi(n)\in (x_2-h,x_2+h)$.  Because $\phi$ is strictly increasing,
\begin{equation}\label{xha}
N_+\le \psi\bigl(a+h\bigr)-\psi\bigl((a-h)_+\bigr)+1.
\end{equation}

Now we estimate the right-hand side using \eqref{eq:psi-est}. If $a\ge 2h$, then $(a-h)_+\ge h\ge Y_0$ for large $h$.
By the mean value theorem,
\[
\psi(a+h)-\psi(a-h)\le 2h\sup_{\xi\in[a-h,a+h]}\psi'(\xi)
\le 2h\cdot\frac{2}{\log(a-h)}\le \frac{4h}{\log h}.
\]
If $0\le a<2h$, then
\[
\psi(a+h)-\psi((a-h)_+)\le \psi(3h)
\le \frac{2\cdot 3h}{\log(3h)}\le \frac{C h}{\log h}
\]
for some absolute constant $C$.

In both cases we obtain $N_+\le C_1 h/\log h + 1$ for large $h$, uniformly in $x_2$.
By symmetry the same bound holds for the non-positive integers, and adding them gives
\[
\sup_{x\in\mathbb{R}^2}\#\bigl(\Lambda\cap B(x,h)\bigr)
\le \frac{C_2\,h}{\log h}\qquad (h\ge H_0).
\]
Consequently $D_1^+(\Lambda)=0$, and also $D_r^+(\Lambda)=0$ for every $r\ge 1$.

It remains to show that $\dim_{Be}(\Lambda)=1$.
For the ball centered at the origin we use the asymptotic
$\sqrt{n^2+(n\log n)^2}=n\log n\bigl(1+o(1)\bigr)$.
Hence there exist constants $c_1,c_2>0$ such that, for all sufficiently large $h$,
\[
\bigl\{n\ge 1: n\log n\le c_1 h\bigr\}\subseteq B(0,h),
\]
and therefore
\[
\#\bigl(\Lambda\cap B(0,h)\bigr)
\ge 2\,\bigl\lfloor\psi(c_1 h)\bigr\rfloor+1
\ge c_2\,\frac{h}{\log h}.
\]
For any $r<1$ we obtain
\[
D_r^+(\Lambda)\ge \limsup_{h\to\infty}\frac{c_2\,h/\log h}{h^r}=\infty,
\]
which yields $\dim_{Be}(\Lambda)\ge 1$.  By Theorem \ref{thupp}, we have  $\dim_{Be}(\Lambda)=1$, which completes the proof for $s=0$.

\textbf{Case $0<s<2$.}
Set $c=\sqrt{4/s^2-1}>0$ and define $\beta(n)=c\,n$. Then $\Lambda=\{(n,cn):n\in\mathbb{Z}\}$. For this $\Lambda$, the distance from $(n,cn)$ to the origin is $|n|\sqrt{1+c^2}$, whence
\[
\#(\Lambda\cap B(0,h))=2\Big\lfloor\frac{h}{\sqrt{1+c^2}}\Big\rfloor+1,
\]
giving $B_1^+(\Lambda)=\frac{2}{\sqrt{1+c^2}}=s$.
For an arbitrary center $x=(x_1,x_2)$, the condition $(n-x_1)^2+(cn-x_2)^2<h^2$ implies that $n$ lies in an interval of length at most $2h/\sqrt{1+c^2}$; therefore
\[
\#(\Lambda\cap B(x,h))\le \Big\lfloor\frac{2h}{\sqrt{1+c^2}}\Big\rfloor+1.
\]
Taking the supremum over $x$ and the $\limsup$ as $h\to\infty$ gives $D_1^+(\Lambda)\le \frac{2}{\sqrt{1+c^2}}=s$. Together with the lower bound from $x=0$, we obtain $D_1^+(\Lambda)=s$.
Since $0<D_1^+(\Lambda)<\infty$, it follows that $\dim_{Be}(\Lambda)=1$.

Hence, we  complete the proof.
\end{proof}

\section{Further questions}
In this paper, we only consider $R=\begin{pmatrix}
		2 & 1 \\
		0 & 2 \\
	\end{pmatrix}$ for the digit set $B=\left\{\begin{pmatrix}
		0 \\
		0 \\
	\end{pmatrix},\begin{pmatrix}
		1 \\
		0 \\
	\end{pmatrix}\right\}$. There are many interesting problems left in the other general cases.

For example, $R=\begin{pmatrix}
		a & c \\
		0 & b \\
	\end{pmatrix}$
with $a\in 2\Z\setminus\{0\}$, $b\in 2\Z+1\setminus\{\pm1\}$ and $c\in \Z\setminus\{0\}$, $B$ is the same as in this paper. In this case, it is known that the corresponding self-affine measure $\mu_{R,B}$ is spectral \cite{Li16}. In general, the spectral structure is more complicated and is hard to show all of spectra explicitly like this model in this paper. A natural question is the following

\textbf{Question} Do all the results established in this paper retain their validity for the above model?

\subsection*{Acknowledgements}
The authors are grateful to the referee for his/her valuable comments and suggestions that led to the improvement of the manuscript. The project was supported by the National Natural Science Foundations of China (12271534,12301105).

\end{document}